\theoremstyle{plain}
\newtheorem{theorem}{\bf Theorem} 
\newtheorem{lemma}{\bf Lemma} 
\newtheorem{problem}{\bf Problem}
\theoremstyle{definition}
\theoremstyle{plain}
\newtoks\thehProclaim
\newtheorem*{Proclaim}{\the\thehProclaim}
\theoremstyle{definition}
\newtoks{\thehRemark}
\newtheorem*{Remark}{\the\thehRemark}
\DeclareRobustCommand{\No}{%
\ifmmode{\nfss@text{\textnumero}}\else\textnumero\fi}
\begin{document}


\def\contentsname{Table of Contents}
\def\figurename{Figure}
\def\partname{Part}
\def\refname{References}
\def\bibname{References}
\def\tablename{Table}
\def\chaptername{Chapter}
\def\proofname{Proof}
\def\appendixname{Appendix}
\def\keywordsname{Keywords}


\title{The perturbation method for the skew-symmetric strongly elliptic systems of PDEs}

\author[A.\,O.~Bagapsh]{А.\,О.~Bagapsh\textsuperscript{1),} \textsuperscript{2)}}

\maketitle


For a Jordan domain with sufficiently smooth boundaries, the solution to the Dirichlet problem for second order
skew-symmetric strongly elliptic system with constant coefficients and regular enough boundary data is constructed in the form of a
power series of a small parameter describing the perturbation of the given system from the Laplace one. The coefficients of this
series are the functions that are determined sequentially as solutions to special Dirichlet problems for the usual Laplace and Poisson equations. The obtained series converges uniformly in the closure of the domain under consideration.

\textbf{Keywords:} elliptic system; Dirichlet problem; perturbation method.

\markright{The perturbation method}

\footnotetext[1]
{Federal Research Center "`Computer Science and Control"' of the Russian Academy of Sciences, Moscow, Russia;
St.Petersburg State University, St.Petersburg, Russia;
Moscow Center for Fundamental and Applied Mathematics, Moscow, Russia \\
e-mail: a.bagapsh@gmail.com}

\footnotetext[2]{This research was supported by the Grants Council of the President of the Russian Federation, grant MK-1204.2020.1, by Ministry of Science and Higher Education of the Russian Federation, project 0705-2020-0047, and by the Theoretical Physics and Mathematics Advancement Foundation «BASIS».}

\section{Introduction and the main result}

In the present paper we are dealing with the Dirichlet problem for second order\break homogeneous elliptic equations with constant
\emph{complex} coefficients in Jordan domains in the complex plane $\mathbb C$. Under certain regularity assumptions on the boundaries of domains under consideration and on the boundary functions, we present a solution representation in the form of a special power series of a small parameter
describing the perturbation of a given equation from the Laplace one, and prove that the corresponding series converges uniformly in the closure
of the domain under consideration.

Consider the following partial differential equation
\begin{equation}\label{el-eq}
af_{xx}+2bf_{xy}+cf_{yy}=0
\end{equation}
on the complex--valued function $f$ of the complex variable $z=x+iy$ with constant coefficients $a$, $b$, $c\in\mathbb C$. Denote $a=a_1+ia_2$, $b=b_1+ib_2$, $c=c_1+ic_2$. The equation \eqref{el-eq} can be written as a system of equations
\begin{equation}\label{el-sys}
\left(A\frac{\partial^2}{\partial x^2}+2B\frac{\partial^2}{\partial x\partial y}+C\frac{\partial^2}{\partial y^2}\right)
\left(\begin{matrix} u \\ v \end{matrix}\right)=\left(\begin{matrix} 0 \\ 0 \end{matrix}\right)
\end{equation}
on the real and imaginary parts $u=\text{Re} f$, $v=\text{Im} f$, where
\begin{equation}\label{matr}
A=\left(\begin{matrix}a_1 & -a_2 \\a_2 & a_1  \end{matrix}\right),\qquad
B=\left(\begin{matrix}b_1 & -b_2 \\b_2 & b_1  \end{matrix}\right),\qquad
C=\left(\begin{matrix}c_1 & -c_2 \\c_2 & c_1 \end{matrix}\right)
\end{equation}
are the constant real skew-symmetric matrices. Through all this paper the equation \eqref{el-eq} is assumed to be elliptic, which means that
$$
\det(A\xi^2+2B\xi\eta+C\eta^2)\ne 0\quad\text{when}\quad\xi^2+\eta^2>0.
$$
Moreover, we will also assume that the equation \eqref{el-eq} is strongly elliptic. The latter property means that
$$
\det(A+2\alpha B+\beta C)\ne 0\quad\text{for each}\quad\alpha\leqslant\beta^2,
$$
see, for instance \cite{Petrovskiy-39}--\cite{HuaLinWu-85}. The conditions of ellipticity and strong ellipticity given above are stated
in terms of the system \eqref{el-sys} that is equivalent to the equation \eqref{el-eq}. It is worth to show how these conditions look line in terms of the initial equation \eqref{el-eq}. In such terms the ellipticity property means that both roots of the corresponding characteristic equation $a\lambda^2+2b\lambda+c=0$ are not real, while the strong ellipticity property means that these roots belong to the different half-planes of $\mathbb C$ with respect to the real axis.

Using some suitable affine transformations of the independent variables ($x$ and $y$) and of the dependent variables ($u$ and $v$), and
some suitable linear combination of equations in system \eqref{el-sys}, one can transform the initial system to the system corresponding to the equation $\mathcal Lf=0$ on new complex valued function $f$ of the new complex variable $z$ with the operator
\begin{equation}\label{L}
\mathcal L=\partial\overline\partial+\tau\partial^2,\qquad\tau\in[0,1),
\end{equation}
see \cite{HuaLinWu-85} and \cite{BagFed-17}, where
\begin{equation*}
\overline\partial=\frac{1}{2}\left(\frac{\partial}{\partial x}+i\frac{\partial}{\partial y}\right),\qquad
\partial=\frac{1}{2}\left(\frac{\partial}{\partial x}-i\frac{\partial}{\partial y}\right)
\end{equation*}
are the standard Cauchy--Riemann differential operator and its complex conjugate. We will also use the notation $\overline\partial_z$
and a$\partial_z$ for them. Note that when $\tau=0$ the equation $\mathcal Lf=0$ turns out to be the Laplace equation $\Delta f=0$, because
$\Delta=4\partial\overline\partial$.

We are interested in the question on solutions construction to the Dirichlet problem for the equation $\mathcal Lf=0$ in its classic
setting. Let us recall the statement of this problem.
\begin{problem}\label{dirprob}
Let $\varOmega$ be a bounded simply connected domain, and let $\varGamma=\partial\varOmega$ be its boundary. Given a function $h\in C(\varGamma)$, find a function $f\in C(\overline\varOmega)$ such that $\mathcal Lf=0$ in $\varOmega$ and $f=h$ on $\varGamma$.
\end{problem}

The most important and intriguing question related with Problem~\ref{dirprob} is to describe such domain $\varOmega$ where the
corresponding Dirichlet problem for the equation $\mathcal Lf=0$ is solvable for every continuous boundary function $h$. This question remains unsolved in the general setting, even it is not known, whether Problem~\ref{dirprob} is solvable for an arbitrary boundary boundary function $h$ data in a Jordan domain of general type. The most general known result (at the best of our knowledge) is that any Dirichlet problem for all
$\mathcal L$ under consideration is always solvable in $C^1$--polygons, see \cite{VerVog-97}.

In this paper we are dealing with the special explicit method of solutions constructing to Problem~\ref{dirprob}, which is based on the solution
representation of the form
\begin{equation}\label{series}
f=\sum\limits_{n=0}^\infty f_n\tau^n.
\end{equation}
Using this method we answer the question on solvability of Problem~\ref{dirprob} with H\"older boundary data in Jordan domains with sufficiently
smooth boundaries, and present new explicit formulae for such solutions.

Our main result, Theorem~\ref{dirprob-solv}, states that if $\varOmega$ is a Jordan domain and if the function $h\in C(\varGamma)$ is
such that $h\circ\omega$ belongs to the H\"older class $C^\alpha(\mathbb T)$ for some $1/2<\alpha<1$ and some conformal mapping $\omega$ of the unit disk $\mathbb D$ to $\varOmega$, then for each $\tau\in[0,1)$ there exists a solution of the Dirichlet problem that is represented in the form \eqref{series}, converging uniformly on the closure $\overline\varOmega$ of $\varOmega$.

Observe that the condition $h\circ\omega\in C^\alpha(\mathbb T)$ is satisfied, for instance, when $h\in C^\alpha(\varGamma)$ and
$\varGamma$ is a Diny smooth curve, where $\varGamma$ is the boundary of $\varOmega$, as above. Indeed, by Lindel\"of's theorem
$\omega'\in C(\overline{\mathbb D})$, so for each $z_1$, $z_2\in\mathbb T$ it holds
$$
|h\circ\omega(z_1)-h\circ\omega(z_2)|\leqslant[h]_\alpha\,|\omega(z_1)-\omega(z_2)|^\alpha\leqslant[h]_\alpha\,
\|\omega'\|_{C(\overline{\mathbb D})}^\alpha
\,|z_1-z_2|^\alpha,
$$
where $[h]_\alpha=\sup_{\zeta_1\ne\zeta_2}|h(\zeta_1)-h(\zeta_2)|/|\zeta_1-\zeta_2|^\alpha$.

The paper has the following structure. In Section~2 we explain the aforementioned method of solutions constructing in the form of representations
\eqref{series}. Further, in Section~3 we prove the convergence of the series \eqref{series} under assumptions to $h$ and $\varOmega$ mentioned
above.

\section{Perturbation method}

Let $\Omega$ be a Jordan domain with the boundary $\varGamma$. Take a function $h\in C(\varGamma)$. We are going to obtain the
function $f$ of the form \eqref{series} such that $\mathcal Lf=0$ and $f|_{\varGamma}=h$. To do this we substitute the expansion \eqref{series}
into the equation $\mathcal Lf=0$ and equate to zero the factors $f_n$ for the same powers of the parameter $\tau$. Moreover, we put
$f_0|_\varGamma=h$ and $f_n|_\varGamma=0$ for $n\ge 1$. Therefore, the conditions on $f_n$, $n\ge0$, are reduced to the following sequence of boundary value problems:
\begin{equation}\label{f0-prob}
\partial\overline\partial f_0=0\quad\text{in}\quad\varOmega,\qquad\qquad f_0|_\varGamma=h
\end{equation}
and
\begin{equation}\label{fn-prob}
\partial\overline\partial f_n=-\partial^2 f_{n-1}\quad\text{in}\quad\varOmega,\qquad\qquad f_n|_\varGamma=0
\end{equation}
for $n\geqslant 1$.

Let $\omega\colon\mathbb D\to\varOmega$ be some conformal mapping from the unit disk $\mathbb D=\{z\in\mathbb C\colon|z|<1\}$ onto $\varOmega$. If
$\varOmega$ is Jordan then, according to the Carath\'eodory extension theorem \cite[Theorem~2.6]{Pommerenke-92} that $\omega$ is extended to the
eponymous homeomorphism from $\overline{\mathbb D}$ onto $\overline\varOmega$. Let us move the problems \eqref{f0-prob} and \eqref{fn-prob},
$n\ge1$, to the disk $\mathbb D$. Define the functions
\begin{equation*}
F:=f\circ\omega,\qquad H:=h\circ\omega\qquad F_n:=F_n\circ\omega.
\end{equation*}
Then we have
\begin{equation}\label{oper-change}
\mathcal Lf=\frac{1}{|\omega'|^2}\left[\partial\overline\partial F+\tau\partial\left(\frac{\overline{\omega'}}{\omega'}
\partial F\right)\right]=:\mathcal MF.
\end{equation}
Equations \eqref{series}, \eqref{f0-prob} and \eqref{fn-prob} yield
\begin{equation}\label{series-disk}
F=\sum\limits_{n=0}^\infty F_n\tau^n,
\end{equation}
where
\begin{equation}\label{F0-prob}
\partial\overline\partial F_0=0\quad\text{in}\quad\mathbb D,\qquad\qquad F_0|_{\mathbb T}=H
\end{equation}
and
\begin{equation}\label{Fn-prob}
\partial\overline\partial F_n=-\partial\left(\frac{\overline{\omega'}}{\omega'}\partial F_{n-1}\right)\quad\text{in}\quad\varOmega,\qquad\qquad F_n|_{\mathbb T}=0
\end{equation}
for $n\geqslant 1$. Assuming that the functions $h$ and $F_{n-1}$ (for some $n>1$) are sufficiently regular, we can write out the solutions for
\eqref{F0-prob} and \eqref{Fn-prob}:
\begin{equation}\label{induct-disk}
F_0(z)=\frac{1}{2i}\int_{\mathbb T}\partial_\zeta G(\zeta, z)h(\zeta)d\zeta,\qquad
F_n(z)=-\int_{\mathbb D} G(\zeta, z)\partial\left(\frac{\overline{\omega'(\zeta)}}{\omega'(\zeta)}\partial F_{n-1}(\zeta)\right)d\mu,
\end{equation}
$n\geqslant 1$. Here
\begin{equation}\label{Green-func}
G(\zeta, z)=\frac{2}{\pi}\log\left|\frac{\zeta-z}{1-\zeta\overline z}\right|
\end{equation}
is the Green's function for the operator $\partial\overline\partial=\Delta/4$ in the disk $\mathbb D$ and $\mu$ is the planar Lebesgue measure.

Let us define the operator $\mathcal P$ acting on $C(\mathbb T)$ and the operators $\mathcal K$, $\mathcal K_z$, $\mathcal K_{\overline z}$ acting
on $L_p(\mathbb D)$ as follows:
\begin{equation*}\label{ind-oper}
\mathcal P[\varphi(z)]:=\frac{1}{2i}\int_{\mathbb T}\partial_\zeta G(\zeta, z)\varphi(\zeta)d\zeta,\qquad
\mathcal K[\varphi(z)]:=\int_{\mathbb D}\partial_\zeta G(\zeta, z)\varphi(\zeta)d\mu
\end{equation*}
and
\begin{equation*}\label{der-oper}
\mathcal K_z[\varphi(z)]:=\text{v.p.}\int_{\mathbb D}\partial_z\partial_\zeta G(\zeta, z)\varphi(\zeta)d\mu,\qquad
\mathcal K_{\overline z}[\varphi(z)]:=\text{v.p.}\int_{\mathbb D}\partial_{\overline z}\partial_\zeta G(\zeta, z)\varphi(\zeta)d\mu.
\end{equation*}
The symbol $\text{v.p.}\int$ means that the corresponding integrals are understood in the sense of Cauchy principal values. For the sake of
brevity we omit this symbol in what follows everywhere, where it will not cause any misunderstanding. From \eqref{Green-func} we obtain
\begin{equation}\label{ind-oper-2}
\begin{aligned}
\mathcal P[\varphi(z)]=\frac{1}{2\pi}\int_{\mathbb T}\left(\frac{1}{\zeta-z}+\frac{\overline z}{1-\zeta\overline z}\right)\varphi(\zeta)d\zeta,\\
\mathcal K[\varphi(z)]=\frac{1}{\pi}\int_{\mathbb D}\left(\frac{1}{\zeta-z}+\frac{\overline z}{1-\zeta\overline z}\right)\varphi(\zeta)d\mu
\end{aligned}
\end{equation}
and
\begin{equation}\label{der-oper-2}
\mathcal K_z[\varphi(z)]=\frac{1}{\pi}\int_{\mathbb D}\frac{\varphi(\zeta)d\mu}{(\zeta-z)^2},\qquad
\mathcal K_{\overline z}[\varphi(z)]=\frac{1}{\pi}\int_{\mathbb D}\frac{\varphi(\zeta)d\mu}{(1-\zeta\overline z)^2}.
\end{equation}

In terms of the operators defined above, the formulae \eqref{induct-disk} for constructing the functions $F_n$ can be written in the short form
\begin{equation}\label{induct-oper}
F_0=\mathcal P[h],\qquad F_n=\mathcal K[(\overline{\omega'}/\omega')\partial F_{n-1}],\quad n\geqslant 1.
\end{equation}
We also need the partial sums of series \eqref{series} and \eqref{series-disk} respectively:
\begin{equation}\label{part-sum}
s_m=\sum_{n=0}^m f_n\tau^n,\qquad S_m=\sum_{n=0}^m F_n\tau^n.
\end{equation}
Now we are ready to state our main result, which is the following theorem.
\begin{theorem}\label{dirprob-solv}
Let $\varOmega$ be a Jordan domain with the boundary $\varGamma$, let $\omega$ be some conformal mapping from $\mathbb D$ onto
$\varOmega$, and let $h\in C(\varGamma)$. Suppose $\varOmega$ and $h$ are such that $h\circ\omega\in C^\alpha(\mathbb T)$ for some $1/2<\alpha<1$.
Then for every $\tau\in[0, 1)$ the series \eqref{series}, where $f_n=F_n\circ\omega^{-1}$, $n\ge0$, and where $F_n$ are defined by
\eqref{induct-oper}, converges uniformly on $\overline\varOmega$ to the function $f\in C(\overline\varOmega)$ that satisfies the equation
$\mathcal Lf=0$ in $\varOmega$ and coincides with $h$ on $\varGamma$.
\end{theorem}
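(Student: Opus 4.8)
The plan is to carry out all estimates after the conformal change of variables, so that it suffices to prove that $\sum_{n\geq0}F_n\tau^n$ converges uniformly on $\overline{\mathbb D}$ to a function $F\in C(\overline{\mathbb D})$ solving $\mathcal MF=0$ in $\mathbb D$ with $F|_{\mathbb T}=H$; the assertion for $f=F\circ\omega^{-1}$ on $\overline\varOmega$ then follows because, by Carath\'eodory's theorem, $\omega$ is a homeomorphism of $\overline{\mathbb D}$ onto $\overline\varOmega$. Write $m:=\overline{\omega'}/\omega'$, so that $|m|\equiv1$ on $\mathbb D$, and set $g_n:=\partial F_n$. Differentiating the recurrence $F_n=\mathcal K[m\,\partial F_{n-1}]$ from \eqref{induct-oper} in $z$ and using the definition of $\mathcal K_z$ turns the recursion into the single relation $g_n=\mathcal K_z[m\,g_{n-1}]=:Tg_{n-1}$, whence $g_n=T^n g_0$. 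The operator $\mathcal K_z$ is exactly the Beurling--Ahlfors transform compressed to $\mathbb D$, and this is the object on which the whole convergence hinges.

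First I would control the seed $g_0=\partial F_0$. Since $F_0=\mathcal P[H]$ is the harmonic extension of $H\in C^\alpha(\mathbb T)$, the classical gradient bound $|\nabla F_0(z)|\leq C(1-|z|)^{\alpha-1}$ gives $\int_{\mathbb D}|g_0|^p\,d\mu<\infty$ precisely when $p<1/(1-\alpha)$; because $\alpha>1/2$ the interval $\bigl(2,\tfrac1{1-\alpha}\bigr)$ is nonempty, so $g_0\in L^p(\mathbb D)$ for some $p>2$. Next I would bound $T$ on $L^p$: since multiplication by the unimodular factor $m$ is an isometry of every $L^p(\mathbb D)$, we have $\|T\|_{L^p\to L^p}\leq\|\mathcal K_z\|_{L^p(\mathbb D)}\leq\|S\|_{L^p(\mathbb C)}$, where $S$ is the full Beurling transform. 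As $S$ is unitary on $L^2(\mathbb C)$ one has $\|S\|_{L^2}=1$, and by interpolation (log-convexity of $p\mapsto\|S\|_{L^p}$) $\|S\|_{L^p(\mathbb C)}\to1$ as $p\to2^+$. Fixing $\tau\in[0,1)$, I would therefore choose $p\in\bigl(2,\tfrac1{1-\alpha}\bigr)$ so close to $2$ that $q:=\tau\,\|T\|_{L^p}<1$. This simultaneous choice of $p$---above $2$ to make $g_0\in L^p$, and close enough to $2$ to make the Beurling transform a $\tau$-contraction---is the crux of the argument and the reason the threshold is exactly $\alpha=1/2$.

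With $p$ fixed, the estimates cascade. From $g_n=T^n g_0$ we get $\|g_n\|_{L^p}\leq\|T\|^n\|g_0\|_{L^p}$. Each $F_n$ with $n\geq1$ solves $\partial\overline\partial F_n=-\partial(m\,g_{n-1})$ with $F_n|_{\mathbb T}=0$; the right-hand side lies in $W^{-1,p}$, so Calder\'on--Zygmund ($W^{1,p}$) regularity for the Laplacian gives $F_n\in W^{1,p}_0(\mathbb D)$ with $\|F_n\|_{W^{1,p}}\leq C\|g_{n-1}\|_{L^p}$, and the Sobolev embedding $W^{1,p}(\mathbb D)\hookrightarrow C(\overline{\mathbb D})$, valid since $p>2$, yields $\|F_n\|_{C(\overline{\mathbb D})}\leq C\|T\|^{n-1}\|g_0\|_{L^p}$. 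Hence $\sum_{n\geq1}\|F_n\|_{C(\overline{\mathbb D})}\tau^n\leq C\tau\|g_0\|_{L^p}\sum_{n\geq1}q^{\,n-1}<\infty$, so the series $\sum_n F_n\tau^n$ converges absolutely and uniformly on $\overline{\mathbb D}$; its sum $F$ is therefore continuous on $\overline{\mathbb D}$.

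It remains to check that $F$ solves the transplanted problem. The boundary condition is immediate: $F_0|_{\mathbb T}=H$ and $F_n|_{\mathbb T}=0$ for $n\geq1$, so $S_m|_{\mathbb T}=H$ for every $m$, and uniform convergence passes this to $F|_{\mathbb T}=H$. For the equation, matching powers of $\tau$ in the recurrence yields the telescoping identity $|\omega'|^2\,\mathcal M S_m=\tau^{m+1}\,\partial(m\,g_m)$; since $\tau^{m+1}\|g_m\|_{L^p}\leq\tau\,q^{\,m}\|g_0\|_{L^p}\to0$, the right-hand side tends to $0$ in $W^{-1,p}$, hence in $\mathcal D'(\mathbb D)$. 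On the other hand $S_m\to F$ uniformly, so $\mathcal M S_m\to\mathcal M F$ in $\mathcal D'(\mathbb D)$; comparing the two limits gives $\mathcal M F=0$ in the sense of distributions, and interior ellipticity of $\mathcal M$ (equivalently, strong ellipticity of $\mathcal L$) upgrades $F$ to a classical solution in $\mathbb D$. Transferring back by $f=F\circ\omega^{-1}$ completes the proof. The main obstacle, as noted above, is the joint admissibility of the exponent $p$; everything else reduces to standard harmonic-extension estimates, Calder\'on--Zygmund bounds, and Sobolev embedding.
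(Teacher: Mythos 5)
Your proposal is correct and follows essentially the same route as the paper: transplant to the disk, note that $\partial F_n=\mathcal K_z[(\overline{\omega'}/\omega')\,\partial F_{n-1}]$ so the derivatives iterate under the disk restriction of the Beurling transform, use the seed bound $\partial F_0\in L_p(\mathbb D)$ for $p<(1-\alpha)^{-1}$ together with $\|K\|_p\to1$ as $p\to2$ to choose $2<p<(1-\alpha)^{-1}$ with $\tau\|K\|_p<1$, sum geometrically, pass to $C(\overline{\mathbb D})$ by Sobolev embedding, and conclude via the telescoping identity, a distributional limit, and Weyl's lemma. The only cosmetic difference is that you invoke abstract $W^{1,p}$ elliptic regularity for the zero-Dirichlet problem where the paper estimates the Green potential explicitly through the operators $\mathcal K$ and $\mathcal K_{\overline z}$ of Lemma~\ref{Lp-oper-pro}.
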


\section{Auxiliary lemmas and proof of Theorem \ref{dirprob-solv}}

Let us recall, that $L_p(\mathbb D)$ is the usual Lebesgue space of functions considered with respect to the planar Lebesgue measure on
$\mathbb D$, and $W^1_p(\mathbb D)$ is the standard Sobolev space of functions in $\mathbb D$. We start with the following technical lemma.
\begin{lemma}\label{Pois-Wp-lem}
If $\varphi\in C^\alpha(\mathbb T)$, where $0<\alpha<1$, then $\mathcal P\varphi\in W^1_p(\mathbb D)$ with any exponent $0<p<(1-\alpha)^{-1}$.
\end{lemma}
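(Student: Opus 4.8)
The plan is to recognize $u:=\mathcal P\varphi$ as the harmonic extension of $\varphi$ into $\mathbb D$ and then to bound its gradient near the boundary at the sharp rate $(1-|z|)^{\alpha-1}$ dictated by the H\"older continuity of $\varphi$. Since $\partial\overline\partial=\Delta/4$, formula \eqref{ind-oper-2} shows that $u$ solves $\Delta u=0$ in $\mathbb D$ with $u|_{\mathbb T}=\varphi$, so by uniqueness of the Dirichlet problem $u$ coincides with the Poisson integral of $\varphi$ (applied componentwise for complex $\varphi$), namely $u(z)=\frac{1}{2\pi}\int_{-\pi}^{\pi}P(z,e^{i\theta})\varphi(e^{i\theta})\,d\theta$ with $P(z,\zeta)=(1-|z|^2)/|z-\zeta|^2$. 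The membership $u\in L_p(\mathbb D)$ for all $p$ is immediate: $\varphi\in C^\alpha(\mathbb T)\subset L_\infty(\mathbb T)$, so $u$ is bounded by the maximum principle, and $\mathbb D$ has finite measure. Hence the lemma reduces to proving that $\nabla u\in L_p(\mathbb D)$ precisely for $p<(1-\alpha)^{-1}$.

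The heart of the matter is the pointwise estimate
\[
|\nabla u(z)|\leqslant C\,[\varphi]_\alpha\,(1-|z|)^{\alpha-1},\qquad z\in\mathbb D.
\]
To prove it I would differentiate the Poisson integral under the integral sign. Because $\frac{1}{2\pi}\int_{-\pi}^{\pi}P(z,e^{i\theta})\,d\theta\equiv 1$, the gradient of the kernel has zero mean, so one may subtract the value of $\varphi$ at the radial projection $\zeta^\ast=z/|z|$ and write $\nabla u(z)=\frac{1}{2\pi}\int_{-\pi}^{\pi}\nabla_z P(z,e^{i\theta})\,[\varphi(e^{i\theta})-\varphi(\zeta^\ast)]\,d\theta$. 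Combining $|\varphi(\zeta)-\varphi(\zeta^\ast)|\leqslant[\varphi]_\alpha|\zeta-\zeta^\ast|^\alpha$ with the elementary bounds $|\nabla_z P(z,\zeta)|\lesssim|z-\zeta|^{-2}+(1-|z|)|z-\zeta|^{-3}$ and $|\zeta-\zeta^\ast|\lesssim|z-\zeta|$, and setting $\delta=1-|z|$, $t=\theta-\arg z$ so that $|z-\zeta|^2\asymp\delta^2+t^2$, reduces the bound to the integrals $\int_{\mathbb R}|t|^\alpha(\delta^2+t^2)^{-1}\,dt$ and $\delta\int_{\mathbb R}|t|^\alpha(\delta^2+t^2)^{-3/2}\,dt$. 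The scaling $t=\delta s$ turns each of them into $\delta^{\alpha-1}$ times an integral that converges absolutely (convergence at infinity uses $\alpha<1$), which yields the displayed bound.

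With the gradient estimate in hand, the conclusion is a one-line computation in polar coordinates:
\[
\int_{\mathbb D}|\nabla u|^p\,d\mu\leqslant C^p[\varphi]_\alpha^p\int_{\mathbb D}(1-|z|)^{(\alpha-1)p}\,d\mu\lesssim\int_0^1(1-r)^{(\alpha-1)p}\,dr,
\]
and the last integral is finite exactly when $(\alpha-1)p>-1$, that is, when $(1-\alpha)p<1$, which is the asserted range $p<(1-\alpha)^{-1}$. Together with $u\in L_p(\mathbb D)$ this gives $u\in W^1_p(\mathbb D)$.

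The main obstacle is the gradient estimate, and specifically extracting the sharp exponent $\alpha-1$: a crude bound on $\int|\nabla_z P|$ gives only the non-integrable rate $\delta^{-1}$, so the cancellation furnished by $\int\nabla_z P=0$ is essential, and one must match the order of vanishing of the H\"older difference against the singularity of the differentiated kernel uniformly as $z\to\mathbb T$. I note that this pointwise bound is precisely the Hardy--Littlewood description of harmonic functions with $C^\alpha$ boundary data, which could be invoked directly in place of the hands-on argument above.
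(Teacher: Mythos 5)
Your proof is correct, and it rests on the same pivotal fact as the paper's — the pointwise bound $|\nabla(\mathcal P\varphi)(z)|\leqslant C\,[\varphi]_\alpha\,(1-|z|)^{\alpha-1}$ followed by the identical polar-coordinate integration — but it derives that bound by a genuinely different mechanism. The paper splits $\psi=\mathcal P\varphi$ into holomorphic and anti-holomorphic pieces $\psi=\psi_1+\overline{\psi_2}$ given by Cauchy-type integrals, invokes Privalov's theorem to get $\psi_1\in C^\alpha(\overline{\mathbb D})$, and then bounds $\partial\psi=\psi_1'$ via the Cauchy formula on circles $T(z,r)$, using the cancellation $\int_{T(z,r)}(\zeta-z)^{-2}\,d\zeta=0$ to subtract $\psi_1(z)$; the bound for $\overline\partial\psi=\overline{\psi_2'}$ follows symmetrically. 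You instead differentiate the Poisson kernel itself and exploit the parallel cancellation $\int\nabla_z P(z,e^{i\theta})\,d\theta=0$ to subtract $\varphi(z/|z|)$, so the H\"older modulus is applied directly to the boundary datum rather than first transferred into the disk. What your route buys: it is self-contained — no appeal to Privalov's theorem, which is a nontrivial result on Cauchy-type integrals — and it is exactly the classical Hardy--Littlewood argument for harmonic functions, as you note at the end. What the paper's route buys: Privalov's theorem does the hard work, after which the derivative estimate is a one-line complex-analytic computation, and the decomposition produces separate clean bounds on $\partial\psi$ and $\overline\partial\psi$, the quantities the paper actually uses later. One detail worth a line in your write-up: the identification of $\mathcal P$ as defined in \eqref{ind-oper-2} with the Poisson integral should be justified, e.g.\ by checking that on $|\zeta|=1$ one has
\begin{equation*}
\frac{1}{2\pi i}\left(\frac{1}{\zeta-z}+\frac{\overline z}{1-\zeta\overline z}\right)i\zeta\,d\theta
=\frac{1}{2\pi}\,\frac{1-|z|^2}{|\zeta-z|^2}\,d\theta;
\end{equation*}
your alternative argument via uniqueness presupposes that $\mathcal P\varphi$ is continuous up to $\mathbb T$ with boundary values $\varphi$, which is the conclusion of that computation rather than its starting point (the paper glosses over the same point by calling $\mathcal P\varphi$ a Poisson integral outright).
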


\begin{proof}
Take $\psi=\mathcal P\varphi$. Since $\varphi\in C(\mathbb T)$, then by the properties of the Poisson integral, we have
$\psi\in C(\overline{\mathbb D})$, so obviously $\psi\in L_p(\mathbb D)$. We are going to prove the $L_p$--integrability of the first partial derivatives of $\psi$. We represent the function $\psi$ as the sum $\psi(z)=\psi_1(z)+\overline{\psi_2(z)}$ of holomorphic components
\begin{equation*}
\psi_1(z)=\frac{1}{2\pi i}\int_{\mathbb T}\frac{\varphi(\zeta)d\zeta}{\zeta-z},\qquad
\psi_2(z)=-\frac{1}{2\pi i}\int_{\mathbb T}\frac{\overline{\varphi(\zeta)}d\zeta}{\zeta-z}-
\frac{1}{2\pi}\int_{\mathbb T}\overline{\varphi(\zeta)}|d\zeta|.
\end{equation*}
Since $\varphi\in C^\alpha(\mathbb T)$ with $0<\alpha<1$, then $\psi_1\in C^{\alpha}(\overline{\mathbb D})$, because of the Privalov
theorem \cite{Privalov-39} for Cauchy type integral. Put $T(z, r):=\{\zeta\in\mathbb C\colon |\zeta-z|=r\}\subset\mathbb D$. Using the Cauchy
formula
$$
\psi_1(z)=\frac{1}{2\pi i}\int_{T(z,r)}\frac{\psi_1(\zeta)d\zeta}{\zeta-z}
$$
we obtain the equality
\begin{equation*}
\partial\psi(z)=\psi_1'(z)=\frac{1}{2\pi i}\int_{T(z,r)}\frac{\psi_1(\zeta)d\zeta}{(\zeta-z)^2}=
\frac{1}{2\pi i}\int_{T(z,r)}\frac{\psi_1(\zeta)-\psi_1(z)}{(\zeta-z)^2}d\zeta,
\end{equation*}
and, furthermore, the estimate
$$
|\partial\psi(z)|\leqslant\frac{1}{2\pi i}\int_{T(z,r)}\frac{|\psi_1(\zeta)-\psi_1(z)|}{|\zeta-z|^2}|d\zeta|\leqslant
\frac{1}{2\pi i}\int_{T(z,r)}\frac{c_\alpha|\zeta-z|^\alpha}{|\zeta-z|^2}|d\zeta|=\frac{c_\alpha}{r^{1-\alpha}},
$$
where $c_\alpha=\sup_{\zeta\ne z}|\psi_1(\zeta)-\psi_1(z)|/|\zeta-z|^\alpha$. Taking the limit $r\to(1-|z|)$ we obtain
$|\partial\psi(z)|\leqslant c_\alpha(1-|z|)^{\alpha-1}$, see also \cite[page 74]{Duren-70} or \cite[page 50]{Pommerenke-92}. This means that
$\partial\psi\in L_p(\mathbb D)$ if $(1-\alpha)p<1$. Similarly, the $L_p$--integrability of the derivative  $\overline\partial\psi=\overline{\psi_2'}$ is established under the same condition on $p$. The lemma is proved.
\end{proof}

Consider the operator
\begin{equation}\label{polar-oper}
K\varphi(z):=\frac{1}{\pi}\int_{\mathbb C}\frac{\varphi(\zeta)d\mu}{(\zeta-z)^2},
\end{equation}
which is a bounded one from $L_p(\mathbb C)$ to itself for $p\in(1, \infty)$ by the celebrated theorem of Calderon and Zygmund \cite{CalZig-52}.
We denote by $\|K\|_p$ the norm of the operator $K$ in $L_p(\mathbb C)$ and similarly we denote the norms of operators acting in $L_p(U)$ with
arbitrary domain $U$. It is important for us that $\|K\|_p\to 1$ when $p\to 2$, see \cite[page 89]{Ahlfors-66}, \cite[pages 5--6]{Christ-90}.

\begin{lemma}\label{Lp-oper-pro}
The operators \eqref{ind-oper-2}, \eqref{der-oper-2} have the following properties:

\smallskip
\textup({\rm i}\textup) $\mathcal K\colon L_p(\mathbb D)\to L_p(\mathbb D)$ is bounded for $p>1$;

\smallskip
\textup({\rm ii}\textup) $\mathcal K_z\colon L_p(\mathbb D)\to L_p(\mathbb D)$ is bounded for $p>1$, and $\|\mathcal K_z\|_p=\|K\|_p$;

\smallskip
\textup({\rm iii}\textup) $\mathcal K_{\overline z}\colon L_p(\mathbb D)\to L_p(\mathbb D)$ is bounded for $p\geqslant 2$; moreover,
$\|\mathcal K_{\overline z}\|_p\leqslant\|K\|_p$ and, in particular, $\|\mathcal K_{\overline z}\|_2=\|K\|_2$.
\end{lemma}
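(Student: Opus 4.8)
The plan is to reduce all three parts to the mapping properties of the planar Beurling transform $K$ from \eqref{polar-oper}, since each of $\mathcal K$, $\mathcal K_z$, $\mathcal K_{\overline z}$ arises from $K$ either by restriction to $\mathbb D$ or by composition with the circle reflection $z\mapsto 1/\overline z$. First I would record $\partial_\zeta G=\frac1\pi(\frac1{\zeta-z}+\frac{\overline z}{1-\zeta\overline z})$, $\partial_z\partial_\zeta G=\frac1{\pi(\zeta-z)^2}$ and $\partial_{\overline z}\partial_\zeta G=\frac1{\pi(1-\zeta\overline z)^2}$, observing that for $z\in\mathbb D$ the pole $\zeta=1/\overline z$ of every reflected term lies outside $\overline{\mathbb D}$. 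Part (i) is then the softest: I would split the kernel of $\mathcal K$ into the solid Cauchy part $(\zeta-z)^{-1}$, which is weakly singular on the bounded domain $\mathbb D$, and the part $\overline z/(1-\zeta\overline z)$, which equals $(1/\overline z-\zeta)^{-1}$ and hence has its singularity pushed outside $\overline{\mathbb D}$. Both kernels have uniformly bounded $L^1$ norms in each variable separately, so the Schur test gives boundedness on $L_p(\mathbb D)$ for every $p>1$.

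For (ii) I would write $\mathcal K_z=RKE$, where $E\colon L_p(\mathbb D)\to L_p(\mathbb C)$ is extension by zero and $R$ is restriction to $\mathbb D$; since $E$ is an isometry and $\|R\|\le1$, this gives $\|\mathcal K_z\|_p\le\|K\|_p$ immediately. For the reverse inequality I would use that $K$ commutes with translations and dilations: for $\varphi(\zeta)=g_0((\zeta-z_0)/\rho)$ with an interior point $z_0\in\mathbb D$ and compactly supported $g_0$, one has $K\varphi(z)=(Kg_0)((z-z_0)/\rho)$, and the substitution $u=(z-z_0)/\rho$ shows that $\|\mathcal K_z\varphi\|_{L_p(\mathbb D)}/\|\varphi\|_p\to\|Kg_0\|_{L_p(\mathbb C)}/\|g_0\|_{L_p(\mathbb C)}$ as $\rho\to0$, because the rescaled domains $(\mathbb D-z_0)/\rho$ exhaust $\mathbb C$. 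Choosing $g_0$ nearly extremal for $K$ then yields $\|\mathcal K_z\|_p\ge\|K\|_p$, hence equality.

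For (iii) the starting point is the reflection identity $\mathcal K_{\overline z}\varphi(z)=\overline z^{-2}(KE\varphi)(1/\overline z)$, which follows from $(1-\zeta\overline z)^2=\overline z^2(\zeta-1/\overline z)^2$. The change of variables $w=1/\overline z$, under which $|z|^{-2p}=|w|^{2p}$ and $d\mu(z)=|w|^{-4}d\mu(w)$, converts the norm into $\|\mathcal K_{\overline z}\varphi\|_{L_p(\mathbb D)}^p=\int_{\mathbb C\setminus\overline{\mathbb D}}|KE\varphi(w)|^p\,|w|^{2p-4}\,d\mu(w)$. For $1<p\le2$ the weight satisfies $|w|^{2p-4}\le1$ on $\mathbb C\setminus\overline{\mathbb D}$, so the right-hand side is at most $\|KE\varphi\|_{L_p(\mathbb C)}^p\le\|K\|_p^p\|\varphi\|_p^p$, giving $\|\mathcal K_{\overline z}\|_p\le\|K\|_p$; at $p=2$ the weight is identically $1$. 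To obtain the stated range $p\ge2$ I would invoke that the kernel $(1-\zeta\overline z)^{-2}$ is Hermitian, so $\|\mathcal K_{\overline z}\|_p=\|\mathcal K_{\overline z}\|_{p'}$, and combine this with the self-duality $\|K\|_p=\|K\|_{p'}$ of the symmetric Beurling kernel to transfer the case $p'\le2$ to $p\ge2$. For the remaining equality $\|\mathcal K_{\overline z}\|_2=\|K\|_2=1$ I would realize the lower bound by constants: expanding $(1-\zeta\overline z)^{-2}=\sum_{k\ge0}(k+1)\zeta^k\overline z^k$ and integrating term by term gives $\mathcal K_{\overline z}[1]\equiv1$ (and more generally $\mathcal K_{\overline z}[\overline\zeta^{\,n}]=\overline z^{\,n}$), so the ratio $1$ is already attained.

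The main points needing care are the two lower bounds and the direction of the weight in (iii). The scaling argument for (ii) requires an interior base point $z_0$ so that the rescaled domains fill the plane, together with a dominated-convergence justification of the limit; a boundary concentration would only capture about half of the mass and hence miss the sharp constant. For $\mathcal K_{\overline z}$ this is exactly why I would close $p=2$ through the explicit relation $\mathcal K_{\overline z}[1]\equiv1$ rather than by concentration. Finally, the upper bound for $p\ge2$ cannot be read directly off the change of variables, where the weight $|w|^{2p-4}\ge1$ points the wrong way; it must instead come from the case $p\le2$ via the duality $\|\mathcal K_{\overline z}\|_p=\|\mathcal K_{\overline z}\|_{p'}$ together with $\|K\|_p=\|K\|_{p'}$.
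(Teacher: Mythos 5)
Your proposal is correct, and in places it is actually more complete than the paper's own proof, but your route through (iii) is genuinely different. For (i) and for the upper bound $\|\mathcal K_z\|_p\leqslant\|K\|_p$ in (ii) you argue exactly as the paper does (two weakly singular kernels; $\mathcal K_z\varphi=K\varphi_1$ with $\varphi_1$ the extension of $\varphi$ by zero). For the reverse inequality in (ii) the paper simply writes ``Therefore $\|\mathcal K_z\|_p=\|K\|_p$'' after establishing $\leqslant$; your translation--dilation concentration at an interior point $z_0$, with the rescaled domains $(\mathbb D-z_0)/\rho$ exhausting $\mathbb C$, is precisely what is needed to justify the equality, so here you fill a gap the paper leaves open. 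In (iii) the two arguments diverge: the paper performs the inversion $\zeta\mapsto 1/\overline\xi$ in the \emph{integration} variable, obtaining $\mathcal K_{\overline z}\varphi=\overline{K\varphi_2}$ with $\varphi_2(\xi)=\overline{\varphi(1/\overline\xi)}/\overline\xi^{\,2}$ supported outside $\overline{\mathbb D}$, and the resulting weight $|\zeta|^{2p-4}\leqslant 1$ on $\mathbb D$ gives $\|\varphi_2\|_p\leqslant\|\varphi\|_p$ exactly when $p\geqslant 2$ --- so the stated range comes out directly, with no duality needed. You instead reflect the \emph{output} variable, which places the weight $|w|^{2p-4}$ outside the disk, yields only $1<p\leqslant 2$ directly, and forces the passage to $p\geqslant 2$ via the two duality facts $\|\mathcal K_{\overline z}\|_p=\|\mathcal K_{\overline z}\|_{p'}$ (Hermitian kernel) and $\|K\|_p=\|K\|_{p'}$; both are standard and correctly used (the adjoint identity should be verified on compactly supported functions, where the kernel $(1-\zeta\overline z)^{-2}$ is bounded on products of compact subsets of $\mathbb D$, so Fubini applies), but this makes your (iii) longer than the paper's. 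Finally, for $\|\mathcal K_{\overline z}\|_2=\|K\|_2$ the paper again proves only $\leqslant$ (noting that $\|\varphi_2\|_2=\|\varphi\|_2$), whereas your identity $\mathcal K_{\overline z}[1]\equiv 1$ supplies the missing lower bound $\|\mathcal K_{\overline z}\|_2\geqslant 1=\|K\|_2$. In short: on the two norm-equality claims your write-up is the more rigorous one, while on the upper bound in (iii) the paper's input-side reflection is the more economical one.
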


\begin{proof}
The statement (i) follows from the fact that the kernel of the integral $\mathcal K$ is the sum of two kernels with a weak singularity, that is they have the growth of the first order.

(ii) Take $\varphi\in L_p(\mathbb D)$, $p>1$. We have $\mathcal K_z\varphi=K\varphi_1$, where the function $\varphi_1$ coinsides with $\varphi$ in the unit disk $\mathbb D$ and equals to zero outside $\mathbb D$, thus $\|\varphi_1\|_{L_p(\mathbb C)}=\|\varphi\|_{L_p(\mathbb D)}$. Therefore,
$\|\mathcal K_z\|_p=\|K\|_p$.

(iii) Making in \eqref{der-oper-2} the change of variables $\zeta\to\xi=1/\overline\zeta$, we obtain
\begin{equation}\label{iii-proof}
\mathcal K_{\overline z}[\varphi(z)]=\frac{1}{\pi}\int_{\mathbb C\setminus\overline{\mathbb D}}\frac{\varphi(1/\overline\xi)}{\xi^2}\cdot
\frac{d\mu}{(\overline\xi-\overline z)^2}=\overline{K[\varphi_2(z)]},
\end{equation}
where $\varphi_2(z)=\overline{\varphi(1/\overline z)}/\overline z^2$ for $z\in\mathbb C\setminus\overline{\mathbb D}$ and $\varphi_2(z)=0$ for $z\in\mathbb D$. Then, by means of reverse change $\xi\to\zeta=1/\overline\xi$ we find
$$
\|\varphi_2\|_{L_p(\mathbb D)}=\left(\int_{\mathbb C\setminus\overline{\mathbb D}}|\varphi_2(\xi)|^p d\mu\right)^{\frac{1}{p}}=
\left(\int_{\mathbb D}|\xi|^{2p-4}\cdot|\varphi(\zeta)|^p d\mu\right)^{\frac{1}{p}}\leqslant\|\varphi\|_{L_p(\mathbb D)}
$$
for $p\geqslant 2$ with the equality for $p=2$. Hence, it follows from \eqref{iii-proof} that
$\|\mathcal K_{\overline z}\|_{L_p(\mathbb D)}\leqslant\|K\|_p\cdot\|\varphi\|_{L_p(\mathbb D)}$, i.e.
$\|\mathcal K_{\overline z}\|_p\leqslant\|K\|_p$ for $p\geqslant 2$ with equality for $p=2$. The proposition is proved.
\end{proof}

\begin{proof}[Proof of Theorem \ref{dirprob-solv}]
First we are going to establish the convergence of the series \eqref{series-disk} and its partial derivatives of the first order
in the norm of $L_p(\mathbb D)$. Lemma \ref{Pois-Wp-lem} yields that $F_0=\mathcal Ph$ belongs to the Sobolev space $W^1_p(\mathbb D)$ for $p<(2(1-\alpha))^{-1}$. Suppose that $F_{n-1}\in L_p(\mathbb D)$, $p>2$, for some number $n$. Then
$$
\partial F_n=\mathcal K_z[(\overline{\omega'}/\omega')\partial F_{n-1}],\qquad
\overline\partial F_n=(\mathcal I+\mathcal K_{\overline z})[(\overline{\omega'}/\omega')\partial F_{n-1}]
$$
in the sense of distributions (see \cite[page 90]{Ahlfors-66}). Using these equalities and applying Lemma~\ref{Lp-oper-pro}, we derive from \eqref{induct-oper} that the following estimates hold
$$
\|\partial F_n\|_{L_p(\mathbb D)}\leqslant\|K\|_p\cdot\|\partial F_{n-1}\|_{L_p(\mathbb D)},\qquad
\|\overline\partial F_n\|_{L_p(\mathbb D)}\leqslant(1+\|K\|_p)\|\partial F_{n-1}\|_{L_p(\mathbb D)}.
$$
This implies
\begin{equation}\label{der-Fn-est}
\|\partial F_n\|_{L_p(\mathbb D)}\leqslant\|K\|_p^n\cdot\|\partial F_0\|_{L_p(\mathbb D)},\quad
\|\overline\partial F_n\|_{L_p(\mathbb D)}\leqslant(1+\|K\|_p)\|K\|_p^{n-1}\cdot\|\partial F_0\|_{L_p(\mathbb D)}.
\end{equation}
Then \eqref{induct-oper} yields
\begin{equation}\label{Fn-est}
\|F_n\|_{L_p(\mathbb D)}\leqslant\|P\|_p\cdot\|\partial F_{n-1}\|_{L_p(\mathbb D)}\leqslant\|P\|_p\cdot\|K\|_p^{n-1}\cdot\|\partial F_0\|_{L_p(\mathbb D)}.
\end{equation}

Estimate \eqref{Fn-est} gives the convergence of the series \eqref{series-disk} in $L_p(\mathbb D)$ to its sum $F\in L_p(\mathbb D)$, when
$\|K\|_p\tau<1$. Moreover,
\begin{multline}\label{F-est}
\|F\|_{L_p(\mathbb D)}=\left\|\sum_{n=0}^\infty F_n\tau^n\,\right\|_{L_p(\mathbb D)}\leqslant\|F_0\|_{L_p(\mathbb D)}+
\sum_{n=1}^\infty\|F_n\|_{L_p(\mathbb D)}\tau^n\leqslant\\
\leqslant\|F_0\|_{L_p(\mathbb D)}+\sum_{n=1}^\infty\|P\|_p\cdot\|K\|_p^{n-1}\cdot\|\partial F_0\|_{L_p(\mathbb D)}\cdot\tau^n=\\
=\|F_0\|_{L_p(\mathbb D)}+\frac{\tau\|P\|_p}{1-\tau\|K\|_p}\|\partial F_0\|_{L_p(\mathbb D)}.
\end{multline}
Estimate \eqref{der-Fn-est} gives the convergence of the first partial derivatives of the series \eqref{series-disk} to the corresponding derivatives of $F$ in the same space $L_p(\mathbb D)$:
\begin{equation}\label{der-F-est}
\|\partial F\|_{L_p(\mathbb D)}\leqslant\frac{\|\partial F_0\|_{L_p(\mathbb D)}}{1-\|K\|_p\tau},\qquad
\|\overline\partial F\|_{L_p(\mathbb D)}\leqslant\|\overline\partial F_0\|_{L_p(\mathbb D)}+
\frac{\tau(1+\|K\|_p)\,\|\partial F_0\|_{L_p(\mathbb D)}}{1-\tau\|K\|_p}.
\end{equation}
The obtained estimates \eqref{F-est} and \eqref{der-F-est} mean the convergence of the series \eqref{series-disk} in the norm of the Sobolev space
$W_p^1(\mathbb D)$:
\begin{equation}\label{ser-conv}
\lim\limits_{m\to\infty}\|F-S_m\|_{W_p^1(\mathbb D)}=0.
\end{equation}
By virtue of Sobolev embedding theorem \cite{Sobolev-88}, we have $W_p^1(\mathbb D)\subset C(\overline{\mathbb D})$ when $1-2/p>0$, or,
equivalently, $p>2$, and the embedding is compact. As $F\in W_p^1(\mathbb D)$, so $F\in C(\overline{\mathbb D})$ and thus
$f=F\circ\omega^{-1}\in C(\overline\varOmega)$. In view of the compactness of the embedding, the series \eqref{series-disk}, and therefore
\eqref{series}, converge uniformly in $\overline{\mathbb D}$ and $\overline\varOmega$ respectively.

\vspace*{3mm}

Now let us prove that the function $f=F\circ\omega^{-1}$ satisfies the equation $\mathcal Lf=0$ in $\varOmega$ by establishing it
firstly in the sense of distributions.

Let $\phi$ be arbitrary test function of the class $C^2_0(\mathbb D)$ of twice continuously differentiable functions in $\mathbb C$ with compact
support in $\mathbb D$ and let
$$
\langle g|\phi\rangle:=\int_{\mathbb C} g(z)\phi(z)d\mu
$$
be the action of the distribution $g$ on $\phi$. We have
\begin{multline*}
\langle F_n|\partial\overline\partial\phi\rangle=\int_{\mathbb D}\partial\overline\partial\phi(z)d\mu(z)
\int_{\mathbb D}\partial_\zeta G(\zeta, z)\frac{\overline{\omega'(\zeta)}}{\omega'(\zeta)}\partial F_{n-1}(\zeta)d\mu(\zeta)=\\
=\int_{\mathbb D}\frac{\overline{\omega'(\zeta)}}{\omega'(\zeta)}\partial F_{n-1}(\zeta)d\mu(\zeta)\partial_\zeta
\int_{\mathbb D} G(\zeta, z)\partial\overline\partial\phi(z)d\mu(z)=\\
=\int_{\mathbb D}\frac{\overline{\omega'(\zeta)}}{\omega'(\zeta)}\partial F_{n-1}(\zeta)\partial\phi(\zeta)d\mu(\zeta)=
\langle(\overline{\omega'}/\omega')\partial F_{n-1}|\partial\phi\rangle,
\end{multline*}
which means that the equality $\partial\overline\partial F_n=-\partial[(\overline{\omega'}/\omega')\partial F_{n-1}]$ holds in $\mathbb D$ in the
sense of distributions. This together with \eqref{oper-change} implies the following chain of equalities that are understood in the sense of
distributions:
$$
\begin{aligned}
|\omega'|^2\mathcal Ls_m&=\sum_{n=0}^m\left(\partial\overline\partial F_n+\tau\partial\left(\frac{\overline{\omega'}}{\omega'}
\partial F_n\right)\right)\tau^n=\\
&=\partial\overline\partial F_0+\sum_{n=1}^m\left(\partial\overline\partial F_n+\partial\left(\frac{\overline{\omega'}}{\omega'}
\partial F_{n-1}\right)\right)\tau^n+\partial\left(\frac{\overline{\omega'}}{\omega'}\partial F_m\right)\tau^{m+1}=\\
&=\partial\left(\frac{\overline{\omega'}}{\omega'}\partial F_m\right)\tau^{m+1}.
\end{aligned}
$$
Thus for each function $\varphi\in C_0^2(\varOmega)$, one can write
\begin{equation*}
\langle\mathcal L s_m\,|\,\varphi\rangle=\langle\partial\left[(\overline{\omega'}/\omega')\partial F_m\right]\,|\,\phi\rangle\cdot\tau^{m+1}=
-\langle\partial F_m\,|\,(\overline{\omega'}/\omega')\partial\phi\rangle\cdot\tau^{m+1},
\end{equation*}
where $\phi:=\varphi\circ\omega\in C_0^2(\mathbb D)$. Then
\begin{multline*}
\langle\mathcal Lf\,|\,\varphi\rangle:=\langle f|\mathcal L\varphi\rangle=\langle f-s_m|\mathcal L\varphi\rangle+\langle s_m|
\mathcal L\varphi\rangle=\\
=\langle F-S_m|\mathcal M\phi\rangle-\langle\partial F_m|(\overline{\omega'}/\omega')\partial\phi\rangle\cdot\tau^{m+1}.
\end{multline*}
Take $1/p+1/q=1$. Applying the H\"older inequality ant taking into account \eqref{der-Fn-est} and \eqref{ser-conv} we obtain
\begin{multline*}
\left|\langle\mathcal Lf\,|\,\varphi\rangle\right|\leqslant \|F-S_m\|_{L_p(\mathbb D)}\cdot\|\mathcal M\phi\|_{L_q(\mathbb D)}+
\|\partial F_m\|_{L_p(\mathbb D)}\cdot\|\partial\phi\|_{L_q(\mathbb D)}\cdot\tau^{m+1}\leqslant\\
\leqslant\|F-S_m\|_{L_p(\mathbb D)}\cdot\|\mathcal M\phi\|_{L_q(\mathbb D)}+\|\partial F_0\|_{L_p(\mathbb D)}\cdot
\|\partial\phi\|_{L_q(\mathbb D)}\cdot\|K\|_p^m\tau^{m+1}\to 0
\end{multline*}
when $m\to\infty$ and $\|K\|_p\tau<1$. Thus $\langle\mathcal Lf\,|\,\varphi\rangle=0$, i.e the function $f$ satisfies the equation
$\mathcal Lf=0$ in $\varOmega$ in the sense of distributions. Since this equation is of elliptic type, by virtue of Wheil's lemma it also holds
in the classic sense.

\vspace*{3mm}

It remains to show that $f|_\Gamma=h$. Note that the estimates \eqref{der-Fn-est} and \eqref{Fn-est} yield $F_n\in W_p^1(\mathbb D)$.
If $p>2$, by the Sobolev embedding theorem, this implies that $F_n\in C(\overline{\mathbb D})$. Since $F_0$ is the harmonic extension of the
boundary function $H\in C^\alpha(\mathbb T)$, then $F_0|_{\mathbb T}=H$. The rest of the functions $F_n$ vanish on $\mathbb T$: it follows from
equations \eqref{induct-disk} and from the continuity of $F_n$ in $\overline{\mathbb D}$. Thus $S_m|_\mathbb T=H$ for all $m$. The compactness of
the embedding $W_p^1(\mathbb D)\subset C(\overline D)$ and the convergence \eqref{ser-conv} lead to uniform convergence
$\|F-S_m\|_{C(\overline{\mathbb D})}\to 0$, so that $F|_\mathbb T=S_m|_\mathbb T=H$. Consequently, $f|_\Gamma=h$.

The given arguments are valid when $2<p<(1-\alpha)^{-1}$, which is true since it is assumed that $1/2<\alpha<1$. As $\|K\|_p\to 1$ when
$p\to 2$, then for each fixed value of $\tau<1$ one can choose a value of $p>2$ sufficiently close to $2$ such that $\|K\|_p\tau<1$. This
finishes the proof of the theorem.
\end{proof}

\section*{Acknowledgement(s)}

I am grateful to professors K.Yu.~Fedorovskiy and A.P.~Soldatov for helpful discussions.

\end{document}